\def\s{\mathbb S}
\def\C{\mathbb C}
\def\bc{\mathbb C}
\def\B{\mathbb B}
\def\C{\mathbb F}
\def\V{\mathbb V}
\def\W{\mathbb W}
\def\C{\mathbb C}
\def\h{H_{\C}}
\def\R{\mathbb R}
\def\V{\mathbb V}
\def\t{\mathfrak T}
\def\P{\mathbb P}
\newtheorem{theorem}{Theorem}[section]
\newtheorem{lemma}[theorem]{Lemma}
\theoremstyle{definition}
\theoremstyle{remark}
\newtheorem{remark}[theorem]{Remark}
\numberwithin{equation}{section} \theoremstyle{plain}
\newtheorem{corollary}[theorem]{Corollary}
\numberwithin{equation}{section}
\newcommand{\secref}[1]{section~\ref{#1}}
\newcommand{\lemref}[1]{Lemma~\ref{#1}}
\newcommand{\corref}[1]{Corollary~\ref{#1}}
\begin{document}
\title[Commuting isometries of the complex hyperbolic space]{Commuting Isometries of the Complex Hyperbolic Space}
\thanks{Cao has been supported by NSF of China (No.10801107) and
NSF of Guangdong Province (No.8452902001000043)}
\author[Wensheng Cao and Krishnendu Gongopadhyay]{Wensheng Cao   and  Krishnendu
Gongopadhyay}
\address{School of Mathematics and Computational Science, Wuyi University,
Jiangmen, Guangdong 529020, P.R. China} \email{wenscao@yahoo.com.cn}
\address{Indian Institute of Science Education and Research (IISER) Mohali, Transit Campus: MGSIPAP Complex, Sector-26, Chandigarh 160019, India}
\email{krishnendu@iisermohali.ac.in, krishnendug@gmail.com}
\date{Aug 20, 2010}
\subjclass[2000]{Primary 51M10; Secondary 51F25, 20G20}
\keywords{complex hyperbolic space, isometries, centralizers}
\begin{abstract}
Let $\h^n$ denote the complex hyperbolic space of dimension $n$. The
group $U(n,1)$ acts as the group of isometries of $\h^n$. In this paper we investigate when two isometries of the complex hyperbolic space commute. Along the way we determine the centralizers.
\end{abstract}
\maketitle
\section{Introduction}\label{intr}
 Let $\h^n$ denote the complex hyperbolic space of dimension $n$. The group $U(n,1)$ acts as the group of isometries of $\h^n$.
 In this paper we ask when two isometries of the complex hyperbolic space commute. A related problem is to determine the centralizers.
 For the two dimensional complex hyperbolic space,
  Basmajian-Miner \cite[Corollaries-8.2-8.3]{bm} determined the conditions for two isometries to commute.
  In arbitrary dimension, a partial answer to this question was given by Kamiya \cite[Proposition-1.11]{kam}.
  However, the answer of Kamiya was far from being complete. In this research note,
  we classify the elements which commute with a given isometry, and along the way determine the centralizers.

Let $\V$ be a vector space over $\C$ of dimension $n+1$. Let $\V$ be 
equipped with the standard complex Hermitian form 
of signature $(n,1)$. Let $\V_-$ denote the set of all vectors with
negative length under this Hermitian form. The $n$-dimensional
complex hyperbolic space $\h^n$ is the image of $\V_-$ in the
projective space $P(\V)$. The isometry group $U(n,1)$ of the
Hermitian space $\V$ acts as the isometries of $\h^n$. The actual 
group of the isometries  is $PU(n,1)=U(n,1)/Z(U(n,1)),$ where
the center $Z(U(n,1))$ can be identified with the circle group. Thus an isometry $g$ of  $\h^n$ lifts to a unitary transformation $\tilde{g}$ in $U(n,1)$ and in the projective model of $\h^n$,
the fixed points of $g$ correspond to eigenvectors of
$\tilde{g}$.
 In the following, we shall often forget the lift, and shall use the same symbol for an isometry, as well as its lifts. An eigenvalue $\lambda$ of $g \in U(n,1)$ is called \emph{negative} if it has an eigenvector in $\V_-$.

In the ball model of the hyperbolic space, by Brouwer's fixed point theorem it follows 
that every isometry $g$ has a fixed point on the closure $\overline{\h^n}$. An isometry $g$ is called \emph{elliptic} if it has a fixed point on $\h^n$.
 It is called \emph{parabolic}, resp. \emph{hyperbolic} if it is non-elliptic, and has exactly one, resp. two fixed points on the boundary $\partial \h^n$.
  The conjugacy classification of the isometries of $\h^n$ is known essentially from the work of Chen-Greenberg \cite[Theorem 3.4.1]{chen}.
  It follows from the conjugacy classification that the elliptic and hyperbolic elements are semisimple, i.e. their minimal polynomial is a product of linear factors. The parabolic elements are not semisimple.

Let $g$ be elliptic. It follows from the conjugacy classification that all the eigenvalues have norm $1$, and it has a negative eigenvalue.
 If the negative eigenvalue has multiplicity at least $2$, then it is called a \emph{boundary elliptic}.  Otherwise, $g$ is a \emph{regular elliptic},
 i.e. all the eigenvalues are distinct. Suppose $g$ is hyperbolic. Then it has a complex eigenvalue outside the unit circle,
and one eigenvalue inside the unit circle. The other eigenvalues lie
on the circle, i.e. they have norm one. If all the eigenvalues of
$T$ are real, then $g$ is called a \emph{strictly hyperbolic}.
Otherwise, it is called a \emph{loxodromic}.  Note that a loxodromic
can be written as $g=g_r g_e$, where $g_r$ is a stretch, and $g_e$
is a boundary elliptic.
 Suppose $g$ is parabolic. If $g$ is unipotent, i.e.
all the eigenvalues are $1$,  it is called a \emph{translation}. A
translation $g$ is called \emph{vertical}, or \emph{non-vertical},
according as the minimal polynomial is $(x-1)^2$, or $(x-1)^3$.
If $g$ is a non-unipotent parabolic, then it has the Jordan
decomposition $g=g_s g_u$, where $g_s$ is semisimple, $g_u$ is
unipotent in $U(n,1)$, and $g_s$ and $g_u$ commute.  A non-unipotent
parabolic is also called an \emph{ellipto-parabolic}.  A non-vertical translation keeps invariant a unique two  dimensional totally geodesic real subspace, which is called the \emph{axis}.

A parabolic element which fixes the point `$\infty$' on the boundary, is called a \emph{Heisenberg translation}. 
Two Heisenberg translations $(\tau,t),(\sigma,s)\in \mathfrak{N}_n=
\bc^{n-1}\times \R$  are called \emph{isotropic} if ${Im}(\tau^*\sigma)=0$.
  The vertical Heisenberg translations are isotropic with any Heisenberg translation. The isotropy between two non-vertical Heisenberg translations implies that the \emph{angle}, cf. Goldman \cite[p-36]{gold},  between the axes is a non-zero real, and geometrically they are \emph{parallel}.
Suppose $S$ and $T$ are two non-vertical translations of $\h^n$. Then they can be simultaneously conjugated to two non-vertical Heisenberg translations. The axes of $S$ and $T$ are \emph{parallel} if the axes of the corresponding Heisenberg translations are parallel.  It will be clear in
Section \ref{sphs} that the parallelism of the axes, or, the isotropy between two translations,  is invariant under conjugation.
Our main theorem is the following.

\begin{theorem}\label{mainth}
\begin{enumerate}

\item{ Let $T$ acts as an elliptic isometry of $\h^n$. Then an element $S$ in $U(n,1)$ commutes with $T$ if and only if it preserves each eigenspace of $T$.   }

\item{Let $T$ acts as a hyperbolic isometry of $\h^n$. Then an element $S$ in $U(n,1)$ commutes with $T$ if and only if it preserves each eigenspace of $T$, and it acts either as the identity or a strictly hyperbolic on the complex line joining the fixed points of $T$.
 }
\item{Let $T$ be translation with fixed point $x$.   Let $S$ commute with
$T$. Then one of the following holds. 

(i) $S$ is boundary elliptic such that  $S(x)=x$ and
$T(fix(S))=fix(S)$.

(ii) $S$ is a  translation with $S(x)=x$ and, $S$ and $T$ are isotropic.

(iii) $S$ is an ellipto-parabolic with fixed point $x$, and,  it has the Jordan decomposition  $S=S_sS_u$ such that   $T(fix(S_s))=fix(S_s)$ and,   $T$ and $S_u$ are
isotropic.}

\item{Let $T=T_sT_u$ be ellipto-parabolic with fixed point $x$.   Let
$S$ commute with $T$. Then one of the following holds. 

(i) $S$ is boundary elliptic with $S(x)=x$, $T_u(fix(S))=fix(S)$ and
$T_s$ preserves each eigenspace of $S$.

(ii) $S$ is translation with $S(x)=x$,  $S(fix(T_s))=fix(T_s)$ and
$S$ and $T_u$ are isotropic.

(iii) $S$ is an ellipto-parabolic with fixed point $x$ and, it has the Jordan decomposition $S=S_sS_u$ such that  $T_u(fix(S_s))=fix(S_s), \ 
S_u(fix(T_s))=fix(T_s)$,  $T_u$ and $S_u$ are isotropic, and, $T_s$
preserves each eigenspace of $S_s$.}
\end{enumerate}
\end{theorem}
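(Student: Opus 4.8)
The plan is to translate every assertion into linear algebra on the Hermitian space $\V\cong\C^{n+1}$, exploiting two principles. First, an invertible $S$ commutes with a semisimple map if and only if it preserves each of its eigenspaces (the spectral projections are polynomials in the map). Second, if $ST=TS$ then $S$ commutes with each factor of the multiplicative Jordan decomposition of $T$, and conversely, because these factors are polynomials in the original element. Throughout I would use the dictionary between the geometry of $\h^n$ and the eigendata of the lift in $U(n,1)$: fixed points in $\overline{\h^n}$ are the projectivized eigenvectors $[v]$ with $\langle v,v\rangle\le 0$, and if $ST=TS$ then $S$ permutes $\mathrm{Fix}(T)$, since $T(Sp)=S(Tp)=Sp$.

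For part (1), $T$ elliptic is semisimple, so the first principle immediately gives that $S$ commutes with $T$ if and only if $S$ preserves each eigenspace; nothing beyond elementary linear algebra is needed. For part (2), $T$ hyperbolic is again semisimple, so commuting is equivalent to preserving each eigenspace. The extra geometric assertion comes from the two simple eigenspaces $\C v_+$, $\C v_-$ attached to the eigenvalues off the unit circle: these are spanned by null vectors and together span a signature $(1,1)$ complex line $L$, which is exactly the complex line joining the fixed points of $T$. Since those two eigenvalues are distinct, $S$ must fix each line separately, acting diagonally on $L$ with eigenvalues $a,b$. The condition $S\in U(n,1)$ forces $a\bar b=1$ through the nonzero pairing $\langle v_+,v_-\rangle$, so $S|_L$ has eigenvalues $a,1/\bar a$; factoring out the common phase shows that, as an isometry of the complex line, $S|_L$ is either the identity or strictly hyperbolic.

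For parts (3) and (4), $T$ is parabolic with a single boundary fixed point $x$, so the permutation principle forces $S(x)=x$; conjugating $x$ to $\infty$ places both $S$ and $T$ in the stabilizer $\mathrm{Stab}(\infty)$, which I would identify with the Heisenberg similarity group $\mathfrak{N}_n\rtimes(U(n-1)\times\R^+)$. Writing $T$ as a Heisenberg translation (part 3) or as $T_sT_u$ (part 4) and $S=(A,\lambda,(\sigma,s))$, I would first compare the dilation factors in $STS^{-1}=T$ to conclude $\lambda=1$, which rules out any hyperbolic or loxodromic behaviour of $S$; comparing the translation parts gives $A\tau=\tau$ when $T$ is non-vertical. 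A case split on the linear part $A$ then produces the three alternatives: $A=\mathrm{id}$ yields a translation $S$, and the residual commutator condition between two Heisenberg translations is precisely the vanishing $\mathrm{Im}(\tau^*\sigma)=0$, i.e. isotropy; otherwise $S$ is boundary elliptic or ellipto-parabolic and I pass to the Jordan decomposition $S=S_sS_u$. Using that $T$ commutes with $S_s$ and $S_u$ separately (and, in part 4, that $T_s,T_u$ likewise commute with these), I would read off the listed conditions—preservation of the eigenspaces of the semisimple parts, invariance of the fixed-point loci $fix(S_s)$ and $fix(T_s)$, and isotropy of the unipotent parts.

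The main obstacle is the bookkeeping in the mixed ellipto-parabolic alternatives (3)(iii) and (4)(iii), where the semisimple and unipotent components interact: one must verify simultaneously that the two unipotent Heisenberg translations are isotropic, that each semisimple part stabilizes the fixed-point locus and eigenspaces of the other, and—crucially—that these conditions are sufficient as well as necessary for $ST=TS$. Establishing sufficiency requires reassembling the commuting relation from the component conditions, which is where the conjugation-invariance of isotropy, to be established in Section \ref{sphs}, and a careful accounting of how the $U(n-1)$ factor acts on $\mathfrak{N}_n$ become essential.
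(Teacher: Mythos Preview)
Your proposal is correct and follows essentially the same route as the paper: the semisimple cases are handled by the orthogonal eigenspace decomposition, and the parabolic cases by conjugating the fixed point to $\infty$, ruling out hyperbolic $S$, and then invoking the Jordan decomposition together with the isotropy and fixed-point-invariance lemmas (\lemref{uulem}, \lemref{uelem}). One small remark: parts (3) and (4) of the theorem only assert \emph{necessity} (``Let $S$ commute with $T$. Then one of the following holds''), so the reassembly of sufficiency that you flag as the main obstacle is not actually demanded by the statement; the paper does not prove it either. A second minor difference is that you exclude hyperbolic $S$ by comparing dilation factors in the Heisenberg similarity group to force $\lambda=1$, whereas the paper argues geometrically that $T(\mathrm{fix}(S))=\mathrm{fix}(S)$ would give the parabolic $T$ two boundary fixed points; both arguments are valid and equally short.
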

A useful corollary to the above theorem is the description of the
centralizers, up to conjugacy. Following Kulkarni \cite{kulkarni},
two elements in a group $G$ are said to be in the same
\emph{$z$-class} if their centralizers are conjugate in in $G$. The
$z$-classes provide useful stratification of the group $G$, and
provide better insight about the nature of the action of $G$ on any
set $X$, cf. Kulkarni \cite{kulkarni}. The
$z$-classes of the real hyperbolic space have been classified by
Gongopadhyay-Kulkarni \cite{g}. The  $z$-classes
of the isometries of the complex hyperbolic space is obtained from
the above theorem.
\begin{corollary}\label{zc}
Let $\mathfrak{N}_n$ denote the unipotent subgroup of a stabilizer subgroup of $U(n,1)$. Let $N$ denote the group of all isotropic translations in $\mathfrak{N}_n$.
The representatives of the $z$-classes in $U(n,1)$ are given by:

\medskip Elliptic: $Z(T)= U(r_1-1,1) \times \Pi_{j=2}^s U(r_j)$, $\Sigma_{j=1}^s r_j=n$

Hyperbolic: $\s^1 \times \R \times \Pi_{j=1}^s U(r_j)$, $\Sigma_{j=1}^s r_j =n-1$.

vertical-translation: $Z(T)= U(n-1) \ltimes  \mathfrak{N}_n$.

non-vertical translation:  $Z(T)=[\s^1 \times U(n-2)]\ltimes N$.

The $z$-class of a non-unipotent element is obtained as $Z(T)=Z(T_s) \cap Z(T_u)$, where $T_s$, $T_u$ are the semisimple and the unipotent part of $T$ in its Jordan decomposition.
\end{corollary}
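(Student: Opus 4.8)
The plan is to read each centralizer directly off Theorem~\ref{mainth}, treating the conjugacy types in turn. Since conjugate elements have conjugate centralizers, it suffices to fix a convenient model $T$ in each class and to identify the group of unitaries satisfying the conditions that the theorem imposes on a commuting $S$. Throughout I would use that the eigenspaces attached to distinct eigenvalues of a semisimple element of $U(n,1)$ are mutually orthogonal for the Hermitian form, so that the form restricts nondegenerately to each of them and $\V$ splits as an orthogonal direct sum.

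For $T$ elliptic, part~(1) says $S\in Z(T)$ exactly when $S$ preserves every eigenspace. Writing the orthogonal decomposition $\V=V_1\oplus\cdots\oplus V_s$ with $\dim_{\C} V_j=r_j$, the negative eigenvalue singles out one summand, say $V_1$, on which the form has signature $(r_1-1,1)$, the rest being positive definite. A unitary preserving this splitting is an arbitrary product of unitaries of the summands, giving $Z(T)=U(r_1-1,1)\times\prod_{j=2}^s U(r_j)$. For $T$ hyperbolic, part~(2) adds the constraint on the complex line $L$ joining the two boundary fixed points. Here $L$ is the span of the two null eigenvectors and carries signature $(1,1)$, while $L^{\perp}$ is positive definite and splits into the norm-one eigenspaces $\prod_j U(r_j)$. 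On $L$, preserving each of the two null lines forces $S|_L=\operatorname{diag}(re^{i\theta},r^{-1}e^{i\theta})$ in the null basis once the unitarity relation between the two entries is imposed; since the common phase $e^{i\theta}$ acts trivially on the projective line $P(L)$, the induced action on $L$ is the identity or a strictly hyperbolic exactly as part~(2) requires, and the independent parameters $r$ and $\theta$ contribute the factor $\s^1\times\R$, whence $Z(T)=\s^1\times\R\times\prod_{j=1}^s U(r_j)$.

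For the parabolic classes I would place the fixed point at $\infty$ and work inside its stabilizer, which is generated by the one-parameter group of stretches $\R_{>0}$, the rotation subgroup, and the Heisenberg translations $\mathfrak{N}_n=\bc^{n-1}\times\R$. A vertical translation $T$ is central in $\mathfrak{N}_n$, so it commutes with all of $\mathfrak{N}_n$; a stretch rescales it and is excluded, while the surviving reductive elements reduce to the rotation subgroup $U(n-1)$, and part~(3) confirms there is nothing more, giving $Z(T)=U(n-1)\ltimes\mathfrak{N}_n$. For a non-vertical translation the cleanest model is a single regular unipotent Jordan block $J_3$ on a signature $(2,1)$ subspace $W$, with the identity on the positive definite $W^{\perp}\cong\bc^{n-2}$, so $T=J_3\oplus I_{n-2}$. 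Writing a commuting $S$ in block form relative to $W\oplus W^{\perp}$ and imposing $STS^{-1}=T$ forces the $W$-block into the commutant of $J_3$ in $U(2,1)$ and confines the off-diagonal blocks to the one-dimensional kernel of $J_3-I$; intersecting with the unitary condition, the diagonal contributes a phase $\s^1$ together with $U(n-2)$ on $W^{\perp}$, and the remaining unipotent and off-diagonal data assemble into the group $N$ of translations isotropic to $T$. This is the step I expect to be the main obstacle: one must check that the unitarity constraints couple the block entries in exactly the way needed to recognise the translation part as $N$ and to see the action of $\s^1\times U(n-2)$ on it, yielding the semidirect product $Z(T)=[\s^1\times U(n-2)]\ltimes N$; the isotropy description of Section~\ref{sphs} and part~(3)(ii) are what make this identification clean.

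Finally, for a non-unipotent $T=T_sT_u$ the Jordan decomposition is canonical, so any $S$ commuting with $T$ commutes with both $T_s$ and $T_u$ and conversely; hence $Z(T)=Z(T_s)\cap Z(T_u)$, and the mixed cases of parts~(3)--(4) follow by intersecting the groups already found. To conclude the $z$-class statement I would observe that in every case the isomorphism type of $Z(T)$ is determined by conjugacy invariants of $T$ alone---the partition of the dimension into eigenspace multiplicities, the location of the negative eigenvalue, and the parallelism/isotropy type in the parabolic cases---so the centralizers of the standard models listed above are precisely the representatives of the $z$-classes.
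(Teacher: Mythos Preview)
Your argument for the semisimple cases and the vertical translation matches the paper's essentially line for line. The one genuine divergence is the non-vertical translation: you choose a Jordan-block model $T=J_3\oplus I_{n-2}$ on $W\oplus W^\perp$ and propose to compute the commutant via block-matrix constraints plus unitarity, whereas the paper stays in the Heisenberg picture, taking $T=T_{(\tau,t)}$ with $\tau\neq 0$ and reading the centralizer straight off Lemma~\ref{hcomu}: commutation forces $S=R_V T_{(\sigma,s)}$ with $V\tau=\tau$ and $\mathrm{Im}(\tau^\ast\sigma)=0$, and since the stabilizer of $\tau$ in $U(n-1)$ is conjugate to $U(n-2)$ one gets $Z(T)=[\s^1\times U(n-2)]\ltimes N$ with no matrix gymnastics. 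Your route is sound in principle and has the virtue of being coordinate-free, but the step you flag as the main obstacle---sorting out how the unitarity relations couple the off-diagonal blocks to recognise the translation part as $N$---is exactly the computation that the paper's Heisenberg machinery has already packaged for you in Section~\ref{sphs}; invoking Lemma~\ref{hcomu} there would shorten your argument considerably. The Jordan-decomposition reduction for non-unipotent elements is handled identically in both.
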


In Section \ref{prel},  we review the  models of
$\h^n$, and the Heisenberg similarity transformations.  In Section
\ref{sphs}, several properties of the Heisenberg similarities are given.
Especially, we show that a unipotent isometry and an
elliptic isometry  commute if and only if they mutually fix their
fixed point sets. This is crucial to the proof of our main theorem.
In Section \ref{commuting},  we prove our main theorem, and, 
 the \corref{zc} is established in Section \ref{cen}.

\section{Preliminaries}\label{prel}
In this section we give the necessary background material on complex
hyperbolic space. More extensive facts may be found in
\cite{chen,gold,par98d}.
\subsection{The Complex Hyperbolic Space}
\subsubsection{The Ball Model}Let $\V$ be a vector space of dimension $(n+1)$ over $\C$ equipped
with the complex Hermitian form of \emph{signature} $(1,n)$,
$$\langle  z,w \rangle  =-\bar z_0w_0 + \bar z_1 w_1 + .... + \bar z_n w_n,$$
where $z$ and $w$ are the column vectors in $\C^{n,1}$ with entries
$z_1,\cdots,z_{n+1}$ and $w_1,\cdots,w_{n+1}$ respectively. Define
$$\V_0=\{z \in \V \ |\;\langle  z,z \rangle  =0\}, \ \V_+=\{z \in \V \ |\;\langle  z,z \rangle >  0\},\ \V_-=\{z \in \V \ |\;\langle z,z\rangle< 0\}.$$
 Let $\P(\V)$ be the projective space obtained from $\V$, i.e,
$\P(\V)=\V-\{0\}/\sim$ , where $u \sim v$ if there exists $\lambda$
in $\C^{\ast}$ such that $u=v \lambda$, and $\P(\V)$ is equipped
with the quotient topology.
 Let $\pi: \V-\{0\}\to \P(\V)$ denote the projection map. We define $H^n_{\C}=\pi(\V_-)$. The boundary $\partial H^n_{\C}$ in $\P(\V)$ is
$\pi(\V_0)$.

If $z=(z_0,...,z_n) \in \V_-$, the condition
$-|z_0|^2+\sum_{k=1}^n |z_i|^2 <0$ implies $z_0 \neq 0$. Therefore
we may define a set of coordinates $\zeta=(\zeta_1,...,\zeta_n)$ on
$H^n_{\C}$ by $\zeta_i(\pi(z))=z_iz_0^{-1}$. This identifies $H^n_{\C}$ with the ball
$$\B^n_{\C}=\{\zeta=(\zeta_1,...,\zeta_n)\;|\;\sum_{k=1}^n |\zeta_i|^2 <1\}. $$
With this identification the map $\pi:\V_- \to H^n_{\C}$ has the
coordinate representation $\pi(z)=\zeta$, where
$\zeta_i=z_i z_0^{-1}$. The boundary $\partial H^n_{\C}$ is
identified with
$$\s^{n-1}_{\C} =\{\zeta=(\zeta_1,...,\zeta_n)\;|\;\sum_{k=1}^n |\zeta_i^2|=1\}.$$
Let $\{f_1,...,f_n\}$ denote the standard basis of $\C^n$. Under the projection map, $f_1=(1,0,..,0)$, and
$-f_1=(-1,0,...,0)$ correspond to boundary points of $\partial H^n_{\C}$.

\subsubsection{The Siegel Domain Model}This model for $\h^n$ is obtained in the same way as above except that the Hermitian form on $\V$ under consideration is
$$
\langle z,\, w\rangle_2= w^*J z=
\overline{w_1}z_{n+1}+\overline{w_2}z_{2}+\cdots+\overline{w_n}z_{n}+\overline{w_{n+1}}z_{1},
$$
where $J$
is the Hermitian matrix $$J=\left(
                         \begin{array}{ccc}
                           0 & 0 & 1 \\
                           0 & I_{n-1} & 0 \\
                           1 & 0 & 0 \\
                         \end{array}
                       \right).$$
The two models are related by a Cayley transformation, cf.
\cite{chen,gold,par98d}. The two Hermitian forms are referred as
\emph{first} and \emph{second} Hermitian forms respectively. When
there is no confusion, we shall forget the subscript from the
Hermitian forms, and it should be clear from the context.

A {\sl unitary transformation}  $g$ is an automorphism of
$\C^{n,1}$, that is, a linear bijection such that $\langle g(
z),\,g(w)\rangle=\langle z,\,w\rangle$ for all $ z$ and $ w$ in
$\C^{n,1}$. We denote the group of all unitary transformations by
$U(n,1)$.

The isometry group of $H^n_{\C}$ is $PU(n,1)=U(n,1)/Z(U(n,1))$.
However, since the elements of $U(n,1)$ are linear, it is convenient
to deal with $U(n,1)$ rather than $PU(n,1)$.

If $g\in U(n,1)$.
Hence in the Siegel domain model,
$$
 w^*Jz=\langle z,\,w\rangle= \langle g z,\,g w\rangle = w^* g^*J g z
$$
for all $ z$ and $w$ in $\C^{n,1}$. Letting $ z$ and $w$ vary over a
basis for $\C^{n,1}$, we see that $J= g^*J g$. From this we find $
g^{-1}=J^{-1} g^*J$. That is:
\begin{equation}\label{hform}
g^{-1}=\left(
  \begin{array}{ccc}
     \bar{d}& \beta^*& \bar{b} \\
    \delta & A^*& \gamma\\
    \bar{c}& \alpha^*& \bar{a}\\
    \end{array}
\right) \ \ \mbox{for}\ \
 g=\left(
  \begin{array}{ccc}
     a& \gamma^*& b \\
    \alpha & A& \beta\\
    c & \delta^* & d\\
    \end{array}
\right)\in U(n,1),\end{equation} where $A$ is an $(n-1)\times (n-1)$
matrix, $\alpha,\beta, \gamma, \delta$ are column vectors in
$\C^{n-1},$  and $a,b,c,d\in \C$.

\subsection{The Heisenberg Similarity Transformations}

The $(2n-1)$-dimensional Heisenberg group $\mathfrak{N}_n$ is
$\bc^{n-1}\times \R$ with the group law
$$(\zeta_1,v_1)(\zeta_2,v_2)=(\zeta_1+\zeta_2,v_1+v_2+2\textrm{Im}(\zeta_2^*\zeta_1)).$$
The boundary
of $\h^n$ may be identified with the one-point
compactification of the Heisenberg group.  We define {\it
horospherical coordinates} on the complex hyperbolic space as follows.
$$\psi: \mathfrak{N}_n\times \R_{+}\to \V_0\cup \V_{-}$$
$$\psi(\zeta,v,u)=\left(
                     \begin{array}{c}
                       (-|\zeta|^2-u+iv)/2 \\
                       \zeta \\
                       1 \\
                     \end{array}
                   \right),\ \  \psi(\infty)=\left(
                                           \begin{array}{c}
                                             1 \\
                                             0 \\
                                             \vdots \\
                                             0 \\
                                           \end{array}
                                         \right).
$$
We also define the origin $o$ to be the point in $\partial H^n_{\C}$
with \emph{horospherical} coordinates $(0, 0, 0),$  that is
$$\psi(o)=(0,\cdots,0,1)^t.$$ In this way the $\partial H^n_{\C}$ is
identical with $\mathfrak{N}_n\times \{0\}\cup \infty$.

The unitary group $U(n-1)$ acts on   horospherical coordinates as
\emph{Heisenberg rotation}, whose action is given by
$$R_{U}: (\zeta,v,u) \to (U\zeta,v,u).$$  The corresponding matrix
in $U(n, 1)$ acting on $\C^{n,1}$ is
$$R_{U}=diag(1,U,1).$$
The fixed point set of such a map is a totally geodesic subspace of
 $H^n_{\C}$ passing through  $o$
and  $\infty$.

The positive real numbers $r \in \R^{+}$  act as  \emph{Heisenberg
dilation} $D_r$ on  horospherical coordinates, whose  action is
given by
$$D_r:(\zeta,v,u)\to
(r\zeta,r^2v,r^2u).$$  As a matrix $D_r$ is given by
$$D_r=diag(r,I_{n-1},\frac{1}{r}).$$

  The $(\tau, t) \in
\mathfrak{N}_n$ acts  as \emph{Heisenberg translation} on
horospherical coordinates, whose action is given by
$$T_{(\tau,t)}:(\zeta,v,u)\to ((\tau,t)(\zeta,v),u)=(\tau+\zeta,v+k+2\textrm{Im}(\zeta^*\tau),u).$$  As a matrix $T_{(\tau,t)}$ is given by
\begin{equation} \label{translation} \left(
     \begin{array}{ccc}
       1 & -\tau^* & \frac{-|\tau|^2+ti}{2} \\
       0 & I_{n-1} & \tau \\
       0 & 0 & 1 \\
     \end{array}
   \right).
\end{equation} When $\tau=0$, $T_{(\tau,t)}$ is a vertical
Heisenberg translation. Otherwise, it is a {non-vertical
Heisenberg translation}. It is easy to see that the subgroup $\t$ form
by vertical Heisenberg translations  is the center of
$\mathfrak{N}_n$ and $\t \approx \R$.

 A parabolic element  is called \emph{screw
parabolic} if it is  the product of a Heisenberg translation and a
Heisenberg rotation.  When $n\geq 3$, we will show that a screw
parabolic  which is the product of a non-vertical Heisenberg
translation and a Heisenberg rotation can not be conjugated to a
screw parabolic which is the product of a vertical  Heisenberg
translation and Heisenberg rotation in  Section \ref{sphs}.

%Hence every Heisenberg isometry  can be uniquely  written as $R_UT_{(\tau,t)}$ , in which we call the $R_U$ the \emph{rotational}
%part and $T_{(\tau,t)}$ the \emph{unipotent} part of this Heisenberg  isometry.

Let $U(n,1)_{\infty}$ denote the isotropy group of  $U(n,1)$ at $\infty$.
Then
\begin{equation}\label{isot}U(n,1)_{\infty}=[\s^1 \times \R^+ \times  U(n-1)] \ltimes  \mathfrak{N}_n,\end{equation}
where $\ltimes$ denotes the semidirect product of groups. Thus every
element $g$ in $U(n,1)_{\infty}$  can be uniquely  written as $g=D_r
R_UT_{(\tau,t)}$ , where $R_U$ is the  elliptic part, $D_r$ the
hyperbolic part and $T_{(\tau,t)}$ the \emph{unipotent} part of this
Heisenberg  isometry. Elements in $U(n,1)_{\infty}$ are called the
\emph{Heisenberg similarity transformations}. If $D_r$ is the identity in
the above decomposition, $g$ is called a \emph{Heisenberg isometry}. It is
easy to see that elements $g$ fixing $o$ and $\infty$ are of the
form
\begin{equation}\label{eq4}
g=diag(\mu, A, \lambda)
\end{equation}
where $\bar \mu \lambda=1$ and $A \in U(n-1)$.

\section{Commuting elements of Heisenberg translations}\label{sphs}
Throughout this section, we use the second Hermitian form of the hyperbolic space.
\begin{lemma}\label{fpl} Let $T,S\in PU(n,1)$.  If  $S$ commutes with $T$  then  $$T(fix(S))=fix(S),\, \,
S(fix(T))=fix(T).$$
\end{lemma}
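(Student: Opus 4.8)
The plan is to prove this by a direct computation with the group action, treating $S$ and $T$ as isometries of the closed ball $\overline{\h^n}$ whose fixed-point sets $fix(S), fix(T)$ are subsets of $\overline{\h^n}$. The entire statement reduces to the elementary observation that the fixed-point set of an isometry is invariant under any isometry that commutes with it, so no special features of $PU(n,1)$ or of the classification of isometries are needed.

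First I would take a point $p \in fix(S)$, so that $S(p)=p$, and use the hypothesis $ST=TS$ to compute $S(T(p))=(ST)(p)=(TS)(p)=T(S(p))=T(p)$. This shows $T(p)\in fix(S)$, and hence $T(fix(S))\subseteq fix(S)$. The same computation with the roles of $S$ and $T$ exchanged gives $S(fix(T))\subseteq fix(T)$, since the relation $ST=TS$ is symmetric in the two isometries.

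To upgrade these inclusions to equalities, I would observe that $T^{-1}$ also commutes with $S$: from $ST=TS$ one obtains $T^{-1}S=ST^{-1}$ by left- and right-multiplying by $T^{-1}$. Applying the previous step to $T^{-1}$ yields $T^{-1}(fix(S))\subseteq fix(S)$, and therefore $fix(S)=T(T^{-1}(fix(S)))\subseteq T(fix(S))\subseteq fix(S)$, which forces $T(fix(S))=fix(S)$. The identity $S(fix(T))=fix(T)$ is obtained in exactly the same way.

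I do not anticipate a genuine obstacle here: the only point requiring a moment's care is that a mere inclusion is not the asserted conclusion, so the invertibility of $T$ (equivalently, the fact that $T^{-1}$ inherits the commuting relation) must be invoked to promote invariance into surjectivity onto $fix(S)$. I would also note that the argument is insensitive to the choice of lift from $PU(n,1)$ to $U(n,1)$, since it uses only the action of $S$ and $T$ as self-maps of $\overline{\h^n}$; commutativity in the projective group is all that is required.
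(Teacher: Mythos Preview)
Your proof is correct and follows essentially the same line as the paper's: both show $T(fix(S))\subseteq fix(S)$ by the direct computation $S(T(p))=T(S(p))=T(p)$, then apply the same argument to $T^{-1}$ (which inherits the commuting relation) to obtain the reverse inclusion. Your write-up is slightly more explicit in spelling out why invertibility upgrades the inclusion to an equality, but the underlying argument is identical.
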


\begin{proof} Let $x\in fix(S)$.  Then $ST(x)=TS(x)=T(x)$, which
implies that $T(fix(S))\subset fix(S)$. Also we have
$T^{-1}(x)=T^{-1}S(x)=ST^{-1}(x),$ which implies that
$T^{-1}(fix(S))\subset fix(S)$. Thus $T(fix(S))=fix(S)$.
\end{proof}

By matrix computation, we have the following observation
\begin{equation} T_{(\tau, t)}R_U=R_UT_{(U^*\tau, t)}.\end{equation}
Using this we have the following.
\begin{lemma}\label{hcomu} Let $T=R_UT_{(\tau,t)}$ and $S=R_VT_{(\sigma,s)}$.
Then $S$ commutes with $T$ if and only if
$$VU=UV,\, \,  (V^*\tau,t)(\sigma,s)=(U^*\sigma,s)(\tau,t).$$
\end{lemma}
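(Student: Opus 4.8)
The plan is to reduce the commutation of $S$ and $T$ to an equality of two elements each written in the normal form $R_W\,T_{(\rho,r)}$, and then to invoke the uniqueness of that form. Two ingredients drive the computation. The first is the relation quoted just above the lemma, $T_{(\tau,t)}R_U = R_U T_{(U^*\tau,t)}$, which lets me slide a rotation past a translation at the cost of applying $U^*$ to the translation vector. The second is that the assignment $(\rho,r)\mapsto T_{(\rho,r)}$ is a homomorphism from the Heisenberg group $\mathfrak{N}_n$ into $U(n,1)$, so $T_{(\rho_1,r_1)}T_{(\rho_2,r_2)} = T_{(\rho_1,r_1)(\rho_2,r_2)}$ with the product taken in $\mathfrak{N}_n$. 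Combined with the obvious $R_U R_V = R_{UV}$, read off from $R_U=\mathrm{diag}(1,U,1)$, these bring both $TS$ and $ST$ into normal form.

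Concretely, I would compute
\begin{equation*}
TS = R_U T_{(\tau,t)} R_V T_{(\sigma,s)} = R_U R_V\, T_{(V^*\tau,t)} T_{(\sigma,s)} = R_{UV}\, T_{(V^*\tau,t)(\sigma,s)},
\end{equation*}
and symmetrically
\begin{equation*}
ST = R_V T_{(\sigma,s)} R_U T_{(\tau,t)} = R_V R_U\, T_{(U^*\sigma,s)} T_{(\tau,t)} = R_{VU}\, T_{(U^*\sigma,s)(\tau,t)}.
\end{equation*}
Each is now a Heisenberg rotation followed by a Heisenberg translation, that is, expressed in the form dictated by the semidirect product decomposition $U(n,1)_\infty = [\s^1\times \R^+ \times U(n-1)]\ltimes \mathfrak{N}_n$ of \eqnref{isot}, here with trivial dilation part.

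The decisive step is then uniqueness: since that decomposition is a genuine semidirect product, an element of $U(n,1)_\infty$ determines its rotation part and its translation part separately. Hence $TS=ST$ holds if and only if the rotation parts coincide, $R_{UV}=R_{VU}$, and the translation parts coincide, $(V^*\tau,t)(\sigma,s) = (U^*\sigma,s)(\tau,t)$. Because $U\mapsto R_U$ is injective, the first condition is exactly $UV=VU$, yielding the two stated equalities.

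The only points requiring genuine verification rather than bookkeeping are the two facts used at the start: that the matrices \eqnref{translation} multiply according to the Heisenberg group law, and that the normal form $R_W T_{(\rho,r)}$ is unique. The first is a direct block-matrix computation in which the upper-right entry reproduces precisely the $2\,\mathrm{Im}(\cdot)$ term of the group law, and the second is immediate from \eqnref{isot}. I expect the main place to be careful is keeping the adjoints $U^*$ and $V^*$ attached to the correct translation vectors and respecting the order of multiplication, since interchanging them would silently produce the wrong relation.
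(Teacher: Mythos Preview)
Your proof is correct and is exactly the argument the paper intends: it states the commutation relation $T_{(\tau,t)}R_U=R_UT_{(U^*\tau,t)}$ immediately before the lemma and then simply says ``Using this we have the following,'' leaving the reader to carry out precisely the normal-form computation and uniqueness step that you wrote out. You have supplied the omitted details faithfully.
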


\begin{lemma}
 Let $T=R_UT_{(\tau,t)}$ and $T_{(\tau,t)}\neq I $.  If $U\tau=\tau$ then $T$ is
 parabolic.
 \end{lemma}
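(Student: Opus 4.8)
The plan is to reduce the statement to a single structural fact already recorded in the excerpt: among the isometries of $\h^n$, the parabolic elements are exactly the non-semisimple ones, the elliptic and hyperbolic elements being semisimple. Since $T=R_U T_{(\tau,t)}$ is a product of elements of $U(n,1)$ and hence an isometry, it therefore suffices to prove that $T$ is \emph{not} semisimple; in particular I will not need to count boundary fixed points directly. First I would multiply $R_U=\mathrm{diag}(1,U,1)$ by the translation matrix \eqref{translation}. Using the hypothesis $U\tau=\tau$, the middle entry $U\tau$ of the last column becomes $\tau$, giving
\[
T=\begin{pmatrix} 1 & -\tau^* & \tfrac{-|\tau|^2+ti}{2} \\ 0 & U & \tau \\ 0 & 0 & 1 \end{pmatrix}.
\]
Being block upper triangular, $T$ has eigenvalues $1$ (twice, from the two corners) together with the eigenvalues of $U\in U(n-1)$, all of modulus one. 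In particular no eigenvalue lies off the unit circle, which already excludes the hyperbolic case; but the semisimplicity criterion will handle everything at once.

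Next I would exhibit a nontrivial Jordan chain at the eigenvalue $1$, which certifies non-semisimplicity because a semisimple operator acts as a scalar on each generalized eigenspace. Writing $e_1=\psi(\infty)=(1,0,\dots,0)^t$ and $e_{n+1}=\psi(o)=(0,\dots,0,1)^t$, a direct computation gives $(T-I)e_1=0$ and
\[
(T-I)e_{n+1}=\left(\tfrac{-|\tau|^2+ti}{2},\,\tau,\,0\right)^t,\qquad (T-I)^2 e_{n+1}=-|\tau|^2\,e_1.
\]
I would then split into two cases. If $\tau\neq 0$, then $(T-I)^2 e_{n+1}=-|\tau|^2 e_1\neq 0$ while $(T-I)^3 e_{n+1}=0$, so $e_{n+1}$ generates a Jordan chain of length three. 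If $\tau=0$, then $T_{(\tau,t)}\neq I$ forces $t\neq 0$, and $(T-I)e_{n+1}=\tfrac{ti}{2}e_1\neq 0$ while $(T-I)^2 e_{n+1}=0$, a chain of length two. Either way $T-I$ is nonzero on the generalized eigenspace of $1$, so $T$ is not semisimple, and hence parabolic by the conjugacy classification.

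The step I expect to carry the real content is the second identity $(T-I)^2 e_{n+1}=-|\tau|^2 e_1$, and specifically the vanishing of its middle block. Expanding that block produces the term $(U-I)\tau$, and this is exactly where the hypothesis $U\tau=\tau$ enters: it forces $(U-I)\tau=0$, so the chain closes up inside the eigenvalue-$1$ generalized eigenspace rather than leaking into the other eigenspaces of $U$. This is the crux of the argument, and it also explains why the conclusion can fail when $U\tau\neq\tau$ (one could then generally absorb the translation part and land on an elliptic element). The remaining computations are routine matrix multiplications using \eqref{translation} together with the Hermitian relation $\tau^*\tau=|\tau|^2$.
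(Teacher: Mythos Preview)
Your argument is correct, but it takes a different route from the paper's. The paper argues by contradiction in horospherical coordinates: assuming $T$ were elliptic, it would fix some $(\zeta,v,u)\in\h^n$, giving $U(\tau+\zeta)=\zeta$ and $t+2\,\mathrm{Im}(\zeta^*\tau)=0$; combining the first equation with $U\tau=\tau$ yields $\zeta^*\tau=\zeta^*U^*U\tau=(\zeta^*-\tau^*)\tau$, hence $|\tau|^2=0$ and then $t=0$, contradicting $T_{(\tau,t)}\neq I$. The hyperbolic case is tacitly excluded since $R_UT_{(\tau,t)}$ has no dilation component. Your approach instead invokes the algebraic characterization ``parabolic $=$ non-semisimple'' from the conjugacy classification and produces an explicit Jordan chain at the eigenvalue $1$; the key computation $(T-I)^2e_{n+1}=-|\tau|^2 e_1$ succeeds precisely because $(U-I)\tau=0$, which is a nice way to isolate where the hypothesis enters. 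Your method has the advantage of handling the elliptic and hyperbolic exclusions uniformly and of making the minimal-polynomial structure (degree two versus three, matching vertical versus non-vertical) visible. The paper's method is shorter and stays within the horospherical picture already set up for the surrounding lemmas.
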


\begin{proof} Suppose that $U\tau=\tau$ and $T$ is elliptic.  Then
$T$ is boundary elliptic, and by its action on horospherical
coordinate  there exists point $(\zeta,v,u)$ in $H_{\C}^n$ such that
$$U(\tau+\zeta)=\zeta, t+2\textrm{Im}(\zeta^*\tau)=0.$$
Hence we have that
$$\zeta^*\tau=\zeta^*U^*U\tau=(\zeta^*-\tau^*)\tau=\zeta^*\tau-|\tau|^2.$$
This implies that $\tau=0$ and $t=0$.  This contradiction implies
$T$ is parabolic.
\end{proof}

\begin{remark}
Note that $T=R_UT_{(\tau,t)}$ with $U\tau=\tau$ and $R_U\neq
I_{n-1}$ can also be conjugated to the form $R_UT_{(\tau_1,t_1)}$
such that $U\tau_1\neq \tau_1$.  We give the reason as follows. Let
$\V_1(U)=\{x\in \C^{n-1}\, | Ux=x\}.$ For $\tau\neq 0$,
$U\tau=\tau$,  by linear algebraic theory,  the matrix equation
$$(U-I)x=\tau$$
has no solution $x$.  This is $\sigma+\tau-U\sigma\neq 0$. When
$n\geq 3$, there exists  nonzero $\sigma\notin \V_1(U)$. For such
$\sigma$,  $T_{(\sigma,s)}TT_{(\sigma,s)}^{-1}$ is
\begin{equation}\label{conju1}
R_UT_{(U^*\sigma+\tau-\sigma,t-2\textrm{Im}(\sigma^*U\sigma)+4\textrm{Im}(4\tau^*\sigma))}=
T_{(\sigma+\tau-U\sigma,t-2\textrm{Im}(\sigma^*U^*\sigma)+4\textrm{Im}(4\tau^*\sigma))}R_U\end{equation}
where $\tau_1=U^*\sigma+\tau-\sigma\neq
(\sigma+\tau-U\sigma)=U\tau_1$. We also have
\begin{equation}\label{conju2}R_VTR_V^{-1}=R_{VUV^*}T_{(V\tau,t)}=T_{(V\tau,t)}R_{VUV^*},\end{equation}
\begin{equation}\label{conju3}D_rTD_r^{-1}=R_UT_{(r\tau,r^2t)}=T_{(r\tau,r^2t)}R_U.\end{equation}
The above two conjugations can not make the unipotent part of
$T=R_UT_{(\tau,t)}$ to be vertical Heisenberg translation.  The
above observations imply that, when $n\geq 3$, we can not conjugate
a screw parabolic $T=R_UT_{(\tau,t)}$ with $U\tau=\tau\neq 0$   to
the form of $R_{U'}T_{(0,t')}$.  By the three conjugations
(\ref{conju1}), (\ref{conju2}),(\ref{conju3}), we  know that the
definition of isotropic in Section \ref{intr} is well defined.  We
also mention that a boundary elliptic element $R_vT_{(\sigma,s)}$
can be conjugated in $U(n,1)_{\infty}$ to diagonal form.

By the above, it also follows that, for $n\geq 3$,  there are two types of
screw parabolic elements $T=R_UT_{(\tau,t)}$ with $U\tau=\tau$,
$\tau \neq 0$ and $\tau=0$, respectively.
\end{remark}

In what follows, we concern about which element can commute with a
unipotent parabolic element $T$.  By conjugation, we may assume that
$T$ fixes $\infty$,  that is $T=R_UT_{(\tau,t)}$.   By Lemma
\ref{fpl}, if $S$ commutes with $T$, then we must have
$$T(fix(S))=fix(S),\, \, S(\infty)=\infty.$$  If $S$ is hyperbolic, then by
$T(fix(S))=fix(S)$, we get a contradiction. This implies that
$S$ is Heisenberg isometry which can be written uniquely
  as $S=R_VT_{(\sigma,s)}$.

\begin{lemma}\label{uulem}
 Let $T$ and $S$  be unipotent.  Then $ST=TS$ if and only
 if $S$ and $T$ are isotropic.
\end{lemma}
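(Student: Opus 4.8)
The plan is to exploit the normal form already available in this section. Since $T$ and $S$ are both unipotent, I would first conjugate so that $T$ fixes $\infty$; this is harmless because commutativity is obviously conjugation invariant and, by the Remark, so is the isotropy condition. A unipotent element of $U(n,1)_{\infty}$ must have trivial dilation and rotation parts (a unitary matrix all of whose eigenvalues equal $1$ is the identity, and the dilation factor must be $1$), so $T=T_{(\tau,t)}$ is a pure Heisenberg translation. For the forward implication I assume $ST=TS$; by the discussion preceding the lemma $S$ is then a Heisenberg isometry $S=R_VT_{(\sigma,s)}$, and its own unipotency forces $V=I_{n-1}$, so $S=T_{(\sigma,s)}$ as well.

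Next I would invoke \lemref{hcomu} with $U=V=I_{n-1}$. The condition $VU=UV$ becomes vacuous, and the remaining requirement collapses to the single identity $(\tau,t)(\sigma,s)=(\sigma,s)(\tau,t)$ in $\mathfrak{N}_n$. Thus commutativity of $S$ and $T$ in $U(n,1)$ is equivalent to commutativity of $(\tau,t)$ and $(\sigma,s)$ in the Heisenberg group, and the problem reduces to a single computation with the group law.

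The heart of the argument is that computation. Writing out both products,
\begin{align*}
(\tau,t)(\sigma,s)&=(\tau+\sigma,\,t+s+2\textrm{Im}(\sigma^*\tau)),\\
(\sigma,s)(\tau,t)&=(\sigma+\tau,\,s+t+2\textrm{Im}(\tau^*\sigma)),
\end{align*}
I see that they coincide exactly when $\textrm{Im}(\sigma^*\tau)=\textrm{Im}(\tau^*\sigma)$. Since $\sigma^*\tau=\overline{\tau^*\sigma}$, one has $\textrm{Im}(\sigma^*\tau)=-\textrm{Im}(\tau^*\sigma)$, so the equality holds precisely when $\textrm{Im}(\tau^*\sigma)=0$, which is exactly the definition of isotropy. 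The converse is the same computation read in reverse: if $S$ and $T$ are isotropic then $\textrm{Im}(\tau^*\sigma)=0$ makes the two displayed products equal, hence $ST=TS$.

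I do not anticipate a genuine obstacle. The only point that needs care is the normalization step, namely verifying that a unipotent element commuting with a translation is itself a translation fixing the same boundary point $\infty$, so that the quantity $\textrm{Im}(\tau^*\sigma)$ defining isotropy is actually meaningful for the pair. Once this is in place, the result is a direct consequence of \lemref{hcomu} and the Heisenberg group law.
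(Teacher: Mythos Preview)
Your proposal is correct and follows essentially the same route as the paper: normalize so that the common fixed point is $\infty$ (the paper invokes \lemref{fpl} directly to obtain the common fixed point, you invoke the discussion preceding the lemma, which itself rests on \lemref{fpl}), reduce both unipotents to Heisenberg translations $T_{(\tau,t)}$ and $T_{(\sigma,s)}$, and then read off the condition $\textrm{Im}(\tau^*\sigma)=0$. The only cosmetic difference is that the paper appeals to the matrix form \eqref{translation} for the final computation, whereas you route it through \lemref{hcomu} and the Heisenberg group law; the content is identical.
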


\begin{proof} Suppose  $TS=ST$.  By Lemma \ref{fpl}, they have a
common fixed point $x$.  Let $K\in U(n,1)$ such that $K(x)=\infty$.
Then $KTK^{-1}$ and $KSK^{-1}$ must be of the forms $T_{(\tau,t)}$
and $T_{(\sigma,s)}$.  It follow from $TS=ST$ and
(\ref{translation}) that  $S$ and $T$ are isotropic. The necessity
is obvious from the definition of isotropic.
\end{proof}

\begin{lemma}\label{uelem}
 Let $T$ be  unipotent and  $S$  elliptic.  Then $ST=TS$ if and only
 if $T(fix(S))=fix(S)$ and $S(fix(T))=fix(T)$.
\end{lemma}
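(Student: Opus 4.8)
The forward implication is immediate: if $ST=TS$, then applying \lemref{fpl} to the commuting pair gives both $T(fix(S))=fix(S)$ and $S(fix(T))=fix(T)$. So the real content is the converse, which I would approach by normalization followed by a short computation. Since $ST=TS$ and the two conditions $T(fix(S))=fix(S)$, $S(fix(T))=fix(T)$ are all invariant under simultaneous conjugation of $S$ and $T$ (because $fix(KSK^{-1})=K(fix(S))$), I am free to conjugate. Assuming $T\neq I$ (the case $T=I$ being trivial), a unipotent $T$ is a translation with a single fixed point on $\partial\h^n$; conjugating this point to $\infty$ puts $T$ in Heisenberg form $T=T_{(\tau,t)}$ with $fix(T)=\{\infty\}$. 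The hypothesis $S(fix(T))=fix(T)$ then forces $S(\infty)=\infty$, so $S\in U(n,1)_\infty$.

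Next I would reduce the elliptic $S$ to a pure Heisenberg rotation. Since $S$ is elliptic it fixes an interior point $p$, and since it also fixes $\infty$ it fixes the second endpoint $q$ of the geodesic through $p$ and $\infty$. Conjugating by an element of $U(n,1)_\infty$ carrying $q$ to $o$ (possible because the Heisenberg translations act transitively on $\partial\h^n\setminus\{\infty\}$), I may assume $S$ fixes both $o$ and $\infty$; hence $S=diag(\mu,A,\lambda)$ with $\bar\mu\lambda=1$, $A\in U(n-1)$, by \eqref{eq4}. Because $S$ additionally fixes a point on the geodesic joining $o$ and $\infty$, one obtains $\mu=\lambda$, so $|\mu|=1$ and, projectively, $S=R_V$ with $V=\mu^{-1}A\in U(n-1)$. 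This conjugation lies in $U(n,1)_\infty$, so it preserves unipotence and the fixed point $\infty$; thus $T$ stays a Heisenberg translation $T_{(\tau,t)}$.

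With $S=R_V$ and $T=T_{(\tau,t)}$ the equivalence is immediate. In horospherical coordinates $fix(S)=\{(\zeta,v,u):V\zeta=\zeta\}\cup\{\infty\}$, governed by $\V_1(V)=\ker(V-I)$, and $T$ acts by $\zeta\mapsto\zeta+\tau$; hence $T(fix(S))=fix(S)$ holds precisely when $\tau\in\V_1(V)$, i.e. $V\tau=\tau$. On the other hand the identity $T_{(\tau,t)}R_V=R_VT_{(V^*\tau,t)}$ gives $TS=R_VT_{(V^*\tau,t)}$ and $ST=R_VT_{(\tau,t)}$, so $ST=TS$ iff $V^*\tau=\tau$, equivalently $V\tau=\tau$. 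Since $S(fix(T))=fix(T)$ holds automatically (as $R_V$ fixes $\infty$), the two fixed-set conditions together are equivalent to $ST=TS$.

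The step I expect to be the main obstacle is the normalization of the elliptic $S$: one must verify carefully that $S$ can be conjugated \emph{within} $U(n,1)_\infty$ to a genuine Heisenberg rotation $R_V$ while $T$ remains a Heisenberg translation, and must pin down $fix(S)$ as exactly the totally geodesic subspace cut out by $\V_1(V)$. Once this normal form is secured, the rotation-translation commutation identity reduces the whole statement to the single scalar condition $V\tau=\tau$.
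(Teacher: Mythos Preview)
Your argument is correct, but it diverges from the paper's proof in one structural choice. The paper does \emph{not} further normalize $S$ to a pure rotation. After conjugating $T$ to a Heisenberg translation $T_{(\tau,t)}$ and using $S(\infty)=\infty$, the paper keeps $S$ in the general Heisenberg form $S=R_VT_{(\sigma,s)}$ (with $V\sigma\neq\sigma$, the ellipticity condition). It then writes down the fixed-point equations for $S$ in horospherical coordinates, applies $T$ to a fixed point, and reads off directly that $T(fix(S))=fix(S)$ forces both $V\tau=\tau$ and $\textrm{Im}(\tau^*\sigma)=0$; these are exactly the two conditions that \lemref{hcomu} requires for $ST=TS$ when $U=I$.

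Your route trades that two-condition extraction for an extra conjugation step: you move the second boundary fixed point of $S$ to $o$, so that $S$ becomes (projectively) $R_V$ and the isotropy condition $\textrm{Im}(\tau^*\sigma)=0$ disappears, leaving only $V\tau=\tau$ to check. This is cleaner at the endgame but front-loads the work into the normalization, precisely the point you flag as the main obstacle. The paper's approach avoids that geometric normalization entirely at the cost of a slightly longer horospherical computation; yours makes the final commutation identity essentially a one-liner. Both are valid and of comparable length.
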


\begin{proof}
By Lemma \ref{fpl}, we only need to prove the
necessity.  Without loss of generality, let $T=T_{(\tau,t)}$ and
$S=R_VT_{(\sigma,s)}$ with $V\sigma\neq \sigma$. The action of $S$
on horospherical coordinate $(\zeta,v,u)$ is
$$S:(\zeta,v,u)\to (V\sigma+V\zeta,v+s+2\textrm{Im}(\zeta^*\sigma),u).$$
Let $(\zeta,v,u)\in H_{\C}^n$ be a fixed point of $S$, then
$$V(\sigma+\zeta)=\zeta, s+2\textrm{Im}(\zeta^*\sigma)=0.$$
 If $T(fix(S))=fix(S)$ then $T((\zeta,v,u))=(\tau+\zeta,t+v+2\textrm{Im}(\zeta^*\tau),u)\in
 fix(S)$, that is
 $$V(\sigma+\tau+\zeta)=\tau+\zeta, s+2\textrm{Im}((\tau+\zeta)^*\sigma)=0.$$
This implies that $V\tau=\tau, \textrm{Im}(\tau^*\sigma)=0$.  By
Lemma \ref{hcomu}, this implies that $ST=TS$.
\end{proof}

\section{ Proof of the Main Theorem}\label{commuting}
In this section, we shall prove our main theorem case by case. Mainly
using Lemma \ref{hcomu} and the proof of our main theorem, we shall
determine the centralizers up to conjugacy.

\subsection{The semisimple cases} To classify the commuting elements of the semisimple isometries, we use the first Hermitian form $\langle, \rangle$.
It is customary to call a
subspace $\W$ (i) \emph{time-like} if the restriction ${\langle ,
\rangle}|_{\W}$ is nondegenerate and indefinite,  (ii)
\emph{space-like} if ${\langle , \rangle}|_{\W}>0$, (iii)
\emph{light-like} if ${\langle , \rangle}|_{\W}=0$. In particular, a
vector  $v\in \V$ is \emph{time-like}, \emph{space-like} or
\emph{light-like} according as $\langle  v,v \rangle  <0 $, $\langle
v, v \rangle >0$ or $\langle v, v \rangle=0$.

Suppose $T$ is semisimple, i.e. elliptic or hyperbolic. Suppose $S$ commutes with $T$. Let $\lambda$ be an eigenvalue of $T$ and let $\V_{\lambda}$
be the eigenspace to $\lambda$. Let $v \in \V_{\lambda}$. Then $TS(v)=ST(v)=S(\lambda v)=\lambda S(v)$, i.e.  $S(v) \in \V_{\lambda}$.
Thus  each eigenspace of $T$ is $S$-invariant, and each eigenspace of $S$ is $T$-invariant.

 Let $\lambda$ and $\mu$ be two distinct  eigenvalues of $T$.   For $v \in \V_{\lambda}$, $w \in \V_{\mu}$,
  note that $$\langle v, w \rangle=\langle Tv, Tw \rangle=\bar \lambda \mu \langle v, w \rangle, $$
thus if $\bar \lambda \mu \neq 1$, then $\V_{\lambda}$ and $\V_{\mu}$ are orthogonal to each-other.

\medskip \noindent {(1)} Let $T$ be elliptic. Suppose each eigenspace of $T$ is $S$-invariant.
We claim that $S$ commutes with $T$. For, suppose the distinct eigenvalues of $T$ are $e^{i \theta_j}$, $j=1,...,s$,
where $e^{i \theta_1}$ is assumed to be time-like and the rest are space-like. For each $j$, let $\V_j$
be the eigenspace to $e^{i \theta_j}$.
Then $\V$ has the orthogonal decomposition: $\V=\oplus_{j=1}^s \V_j$. Let $Q_j$ denote the Hermitian form ${\langle, \rangle}|_{\V_j}$.
Since $S$ is an isometry of $\V$,  each $\V_j$ is $S$-invariant, hence $S$ preserves the orthogonal decomposition and also the restricted metric on each of the summands.
 Thus $S$ commutes with $T$ if and only if $S|_{\V_j}$ commutes with $T|_{\V_j}$. Let $Q_j$ denote the
 restriction of $Q$ to $\V_j$. Since $T|_{\V_j}=e^{i \theta_j} I_{\V_j}$, it is a central element in $U(\V_j, Q_j)$.
Hence any $S|_{\V_j}$ in $U(\V_j, Q_j)$ commute with $T|_{\V_j}$. This establishes the claim.

\medskip \noindent {(2).} Suppose $T$ is hyperbolic. Let $L_r$, resp, $L_{r^{-1}}$ denote the one-dimensional eigenspaces
corresponding to the light-like eigenvalues $r$, resp. $r^{-1}$. Let $\V_r=L_r + L_{r^{-1}}$.
Let $\V_j$ denote the eigenspace of $T$ corresponding to the space-like eigenvalue $e^{i \theta_j}$.
In this case $\V$ has the orthogonal decomposition:
$$\V=\V_r \bigoplus \oplus_{j=1}^s  \V_j,$$
where $\V_r=L_r + L_{r^{-1}}$. Clearly $S$ keeps $\V_r$ invariant. For $j=r,1,...,s$, Let $Q_j$ denote the Hermitian form ${\langle, \rangle}|_{\V_j}$.
 As above, we see that $S|_{\V_j}$ commutes with $T|_{\V_j}$ for each $j=1,...,s$.  Hence $S$ commutes with $T$ if and only if $S|_{\V_r}$ commutes with $T|_{\V_r}$.
 Since $S$ keeps the eigenspaces of $T$ invariant,  it keeps the light-like lines $L_r$ and $L_{r^{-1}}$ invariant.
Hence $S|_{\V_r}$ commutes with $T|_{\V_r}$ if and only if it is either a strictly hyperbolic or a central element in $U(\V_r, Q_r)$.  Consequently the assertion follows.

\subsection{Parabolic case} We shall use the Siegel domain model, i.e. the second Hermitian form to deal with the parabolic case. Let $T$ be a parabolic isometry.
 Let $S$ commute with $T$. Suppose $S$ is semisimple. It can not be hyperbolic: for otherwise, $T$ would have two fixed points on the boundary.
 So $S$ must be elliptic. We assert that $S$ can not be a regular elliptic. For otherwise, $T$  fixes  the unique fixed point of $S$, which is impossible.
 Hence $S$ must be a boundary elliptic.

By conjugation, we assume its unique fixed point to be $\infty$, i.e. $T$ is a Heisenberg isometry.
 If $T$ is ellipto-parabolic, it has the unique
Jordan decomposition $T=T_sT_u=T_uT_s$, where $T_s$ and $T_u$ are
its semi-simple and unipotent components, respectively. From the Jordan decomposition,
it follows that an element commutes with $T$ if and only if it commutes with both $T_s$ and $T_u$. Now the assertions follow from \lemref{fpl}, \lemref{uulem} and
\lemref{uulem}.

This completes the proof of the main theorem.

\subsection{The Centralizers: Proof of \corref{zc}} \label{cen}
\subsubsection{Elliptic elements} Let $T$ be elliptic. Then we have seen that $\V$ has an orthogonal decomposition:
$\V=\oplus_{j=1}^s\V_j$, where for each $j$, $\V_j$ is the
eigenspace to $e^{i \theta_j}$, and $\theta_1$ be the negative
eigenvalue. Let $\dim \V_j=r_j$. For each $j$, $T|_{\V_j}$ is a
central element in $U(\V_j, Q_j)$,which we identify with
$U(r_1-1,1)$, resp. $U(r_j)$ for $j=1$  resp. $>1$. Hence
$$Z(T)= \Pi_{j=1}^s Z(T|_{\V_j})=U(r_1-1,1) \times \Pi_{j=2}^s U(r_j).$$
\subsubsection{Hyperbolic elements}  Let $T$ be hyperbolic. Then $\V$ has the orthogonal decomposition
$\V=\V_r \bigoplus \oplus_{j=1}^s  \V_j$, where $V_r$ is the
$2$-dimensional subspace spanned by the light-like eigenvectors, and
$\V_j$ are the eigenspaces spanned by space-like eigenvectors to
eigenvalues $e^{i \theta_j}$, $j=1,..,s$. Hence $Z(T)=Z(T|_{\V_r})
\times \Pi Z(T|_{\V_j})$. Let $\dim \V_j=r_j$. As above, we identify
it with $U(r_j)$. Note that $T|_{\V_r}$ acts as a hyperbolic element
in $U(\V_r, Q_r) \equiv U(1,1)$, hence $Z(T|_{\V_r})=\s^1 \times
\R$. Hence
$$Z(T)=\s^1 \times \R \times \Pi_{j=1}^s U(r_j). $$
\subsubsection{Parabolic elements}
First note that the centralizer of $U(n,1)$ is
$$\s^1=\{\mu I\ | \ |\mu|=1\}.$$

\medskip (1)  Suppose $T$ is a vertical translation.   Since $T$ can be conjugated to vertical Heisenberg translation, by Lemma \ref{hcomu}, up to
conjugation in $PU(n,1)$,
$$Z(T)= U(n-1) \ltimes  \mathfrak{N}_n. $$

Alternatively, using the linear group $U(n,1)_{\infty}$, we choose the following subgroups:
\begin{equation}\label{adeq1}\hat \s^1=\bigg\{ g \in U(n,1) \ | \ g=diag(\mu, I_{n-1}, \mu), \ |\mu|=1 \bigg\}.\end{equation}
$$\hat U(n-1)=\bigg\{g \in \hat U(1,n) \ | \ g=diag(1, A, 1), \ A \in U(n-1)\bigg\}.$$
By direct computation, following the approach of Chen-Greenberg \cite[section-4.2]{chen},
one can see that every element $S$ in $Z(T)$ can be written uniquely as $S=ABP$, $A \in \hat \s^1$, $B \in \hat U(n-1)$, $P \in \mathfrak{N}_n$.

Thus we have, up to conjugation in $U(n,1)$,
$$Z(T)=[\s^1 \times U(n-1)]\ltimes \mathfrak{N}_n.$$

(2) Suppose $T$ is a non-vertical translation. Let $T=T_{(\tau,t)}$ and   $S \in Z(T)$.
Let
$$N=\{T_{\sigma,s} \in \mathfrak{N}_n | \ \tau^{\ast}\sigma \in \R \}.$$
Note that $\t$ is a subgroup of $N$. It follows from \secref{sphs} that,  up to conjugation,
 $S=R_V T_{(\sigma,s)}$ with $\tau\neq 0$, $V \tau=\tau$, and $\tau^{\ast} \sigma \in \R$.  Using linear algebraic methods,
 there exists a unitary matrix $U$ such that
$U\tau=(|\tau|,0,\cdots,0)^t$, that is $UVU^*e_1=e_1.$  Thus
$$ E_{\tau}=\{V\in U(n-1)\, |\,   V\tau=\tau\}\approx U(n-2),$$
which implies that, up to conjugation in $PU(n,1)$,
$$Z(T)= U(n-2) \ltimes  N. $$

Alternatively,  in the linear group $U(n,1)_{\infty}$, let
$$\hat E_a=\{g \in U(1, n)_{\infty} \ | \ g=diag(\mu, A, \mu), \ A a=\mu a\}. $$
By direct computation, one can see that every element $S$ in $Z(T)$ can be written uniquely as $S=BP$, where $B \in \hat E_{a}$, $P \in N$.
Further we see that $\hat E_a$ can be identified with the group $\s^1 \times U(n-1)$.
Hence, up to conjugation in $U(n,1)$,
$$Z(T)=[\s^1 \times U(n-2)]\ltimes N.$$
\subsubsection{A topological description of the centralizer of $T_{\sigma, s}$}
Let $\langle, \rangle_o$ denote the Euclidean norm in $\C^{n-1}$,
i.e. $\langle x, y \rangle_o=x^* y$. Thus   we can identify
$$N/\t=\{c \in \C^{n-1} \ | \  \langle a,c  \rangle_o  \in \R\}.$$
We identify $\C^{n-1} = \R^{2(n-1)}$, and let $\langle, \rangle_o$ be the usual Euclidean norm on $\R^{2(n-1)}$.
Then $\langle a, c \rangle_o=r$ is a $(2n-3)$-dimensional affine hyperspace orthogonal to $a$, and hence $N/\t \approx \R \times \R^{2(n-2)}\approx \R \times \C^{n-2}$.
Thus
$$N\approx[\R \times \C^{n-2}]\ltimes \t \approx [\R \times \C^{n-2}]\ltimes \R.$$
Hence
$$Z(T)\approx [\s^1 \times U(n-2)]\ltimes[(\R \times \C^{n-2}) \ltimes \R].$$

\end{document}